\documentclass[12pt,reqno]{amsart}

\setlength{\textheight}{23cm}
\setlength{\textwidth}{16cm}
\setlength{\topmargin}{-0.8cm}
\setlength{\parskip}{0.3\baselineskip}
\hoffset=-1.4cm

\usepackage{amssymb}

\newtheorem{theorem}{Theorem}[section]
\newtheorem{lemma}[theorem]{Lemma}
\newtheorem{proposition}[theorem]{Proposition}
\newtheorem{corollary}[theorem]{Corollary}

\theoremstyle{remark}
\newtheorem{remark}[theorem]{Remark}

\numberwithin{equation}{section}

\baselineskip=15.5pt

\begin{document}

\title[Moduli space of holomorphic $G$-connections]{On the moduli space of holomorphic 
$G$-connections on a compact Riemann surface}

\author[I. Biswas]{Indranil Biswas}

\address{School of Mathematics, Tata Institute of Fundamental Research,
Homi Bhabha Road, Mumbai 400005, India}

\email{indranil@math.tifr.res.in}

\subjclass[2010]{14H60, 14D20, 53B15}

\keywords{Holomorphic connection, character variety, Riemann-Hilbert correspondence,
holomorphic symplectic form, affine variety.}

\date{}

\begin{abstract}
Let $X$ be a compact connected Riemann surface of genus at least two and $G$ a connected
reductive complex affine algebraic group. The Riemann--Hilbert correspondence produces
a biholomorphism between the moduli space ${\mathcal M}_X(G)$ parametrizing holomorphic
$G$--connections on $X$ and the $G$--character variety
$${\mathcal R}(G)\,:=\, \text{Hom}(\pi_1(X, x_0),\, G)/\!\!/G\, .$$ While ${\mathcal R}(G)$
is known to be affine, we show that ${\mathcal M}_X(G)$ is not affine. The scheme ${\mathcal R}(G)$
has an algebraic symplectic form constructed by Goldman. We construct an algebraic
symplectic form on ${\mathcal M}_X(G)$ with the property that the Riemann--Hilbert correspondence
pulls back to the Goldman symplectic form to it. Therefore, despite the Riemann--Hilbert
correspondence being non-algebraic, the pullback of the Goldman symplectic form by
the Riemann--Hilbert correspondence nevertheless continues to be algebraic.
\end{abstract}

\maketitle

\section{Introduction}\label{sec0}

Let $X$ be a compact connected Riemann surface of genus at least two. Take a connected 
reductive complex affine algebraic group $G$. Let ${\mathcal M}_X(G)$ denote the moduli space
of pairs $(E_G,\, D)$, where $E_G$ is a holomorphic principal $G$--bundle on $X$ and
$D$ is a holomorphic connection on $E_G$. This moduli space ${\mathcal M}_X(G)$ is a
normal scheme, but it need not be connected. The tangent space to ${\mathcal M}_X(G)$ at
a point $(E_G,\, D)$ is given by the first hypercohomology of a two-term complex given by the
holomorphic connection on the adjoint vector bundle $\text{ad}(E_G)$ induced by $D$. Using this
description of the tangent space, we construct an algebraic symplectic form on
${\mathcal M}_X(G)$, which is denoted by $\Theta$ (see Lemma \ref{lem1} and Corollary
\ref{cor1}).

Fix a point $x_0\, \in\, X$. Let ${\mathcal R}(G)\,:=\, \text{Hom}(\pi_1(X, x_0),\, G)/\!\!/G$
be the $G$--character variety associated to the pair $(X,\, G)$. This ${\mathcal R}(G)$
is a normal affine scheme though it may not be connected. In fact, the connected
components of ${\mathcal R}(G)$ (as well as those of ${\mathcal M}_X(G)$) are parametrized
by the torsion part of $\pi_1(G)$. The scheme ${\mathcal R}(G)$, by construction, is affine.
The Riemann--Hilbert correspondence produces a biholomorphism $\Phi\, :\, {\mathcal M}_X(G)\, 
\longrightarrow\, {\mathcal R}(G)$; it sends any $(E_G,\, D)$ to the monodromy representation 
for the flat connection on $E_G$ constructed using $D$ and the Dolbeault operator defining
the holomorphic 
structure of $E_G$. We prove that the scheme ${\mathcal M}_X(G)$ is not affine
(Proposition \ref{prop1}). In particular, the Riemann--Hilbert correspondence is not algebraic.

Goldman, \cite{Go}, constructed an algebraic symplectic form $\Theta_{\mathcal R}$ on 
${\mathcal R}(G)$. As noted above, the map Riemann--Hilbert correspondence $\Phi$ is not algebraic.
However, the surprising 
fact is that the pullback $\Phi^*\Theta_{\mathcal R}$ of the algebraic symplectic form 
$\Theta_{\mathcal R}$ remains algebraic. To be precise, the form $\Phi^*\Theta_{\mathcal R}$ 
coincides with the algebraic form $\Theta$ on ${\mathcal M}_X(G)$ (see Theorem \ref{thm1}).

\section{Moduli space of connections and character variety}

Let $G$ be a connected reductive affine algebraic group defined over $\mathbb C$. The Lie 
algebra of $G$ will be denoted by $\mathfrak g$. A parabolic subgroup of $G$ is a
Zariski closed connected subgroup $P\, \subset\, G$ such that the quotient $G/P$
is a projective variety. The unipotent radical of a parabolic subgroup $P$ will be denoted by
$R_u(P)$. The quotient group $P/R_u(P)$ is called the Levi quotient of $P$ (see
\cite[p. 158, \S~11.23]{Bo}, \cite[\S~30.2, p. 184]{Hu}). The
center of $G$ will be denoted by $Z_G$.

Let $X$ be a compact connected Riemann surface of genus $g$. The
holomorphic cotangent bundle of $X$ will be denoted by $K_X$.

The holomorphic tangent bundle of any complex manifold $Z$ will be denoted
by $TZ$.

Take a holomorphic principal $G$--bundle
\begin{equation}\label{e1}
p\, :\, E_G\,\longrightarrow\, X
\end{equation}
over $X$. Consider
the action of $G$ on $TE_G$ given by the action of $G$ on $E_G$. The quotient
$$
\text{At}(E_G)\,:=\, (TE_G)/G
$$
is a holomorphic vector bundle over $E_G/G\,=\, X$; it is called the
\textit{Atiyah bundle} for $E_G$. Consider the differential $$dp\, :\,
TE_G\, \longrightarrow\, p^* TX$$ of the projection $p$ in \eqref{e1}. Note that
$G$ has a natural action on $p^*TX$ because it is
pulled back from $E_G/G\,=\, X$. The above homomorphism $dp$ is
evidently equivariant for the actions of $G$ on $TE_G$ and $p^*TX$. Therefore,
it descends to a homomorphism
\begin{equation}\label{e2}
p'\, :\, \text{At}(E_G)\,:=\, (TE_G)/G\, \longrightarrow\, (p^* TX)/G \,=\, TX\, .
\end{equation}
This homomorphism $p'$ is surjective because $dp$ is so. So we have
$$
\text{kernel}(p')\,=\, \text{kernel}(dp)/G \,=\, T_{E_G/X}/G\, ,
$$
where $T_{E_G/X}\,=\, \text{kernel}(dp)\, \longrightarrow\, E_G$ is the relative tangent bundle for the
projection $p$.

Using the action of $G$ on $E_G$, the relative tangent bundle $T_{E_G/X}$ is identified
with the trivial vector bundle $E_G\times {\mathfrak g}\,\longrightarrow\, E_G$
with fiber $\mathfrak g\,:=\, \text{Lie}(G)$. Consequently,
the quotient $T_{E_G/X}/G$ gets identified with the vector bundle over $X$ associated to
the principal $G$--bundle $E_G$ for the adjoint action of $G$ on $\mathfrak g$. This
associated vector bundle, which is denoted by $\text{ad}(E_G)$, is called the adjoint bundle
for $E_G$.

Therefore, we have a short exact sequence of holomorphic vector bundles on $X$
\begin{equation}\label{e3}
0\, \longrightarrow\, \text{ad}(E_G)\, \longrightarrow\, \text{At}(E_G)\,
\stackrel{p'}{\longrightarrow}\, TX \,\longrightarrow\, 0
\end{equation}
(see \cite[p.~187, Theorem 1]{At}). The sequence in \eqref{e3} is known as the \textit{Atiyah exact
sequence} for $E_G$. A \textit{holomorphic connection} on $E_G$ is a holomorphic homomorphism
of vector bundles
$$
D\, :\, TX \, \longrightarrow\, \text{At}(E_G)
$$
such that $p'\circ D\,=\, \text{Id}_{TX}$ \cite[p.~188, Definition]{At}.

A holomorphic $G$--\textit{connection} on $X$ is a pair $(E_G,\, D)$, where $E_G$ is a 
holomorphic principal $G$--bundle on $X$, and $D$ is a holomorphic connection on $E_G$.

The curvature of a holomorphic connection on $E_G$ vanishes identically because
$\Omega^{2,0}_X\,=\, 0$. Therefore, holomorphic $G$--connections correspond to
homomorphisms from the fundamental group of $X$ to $G$
(see \cite[p.~200, Proposition 14]{At}). In particular, $E_G$ admits a holomorphic connection if
and only if $E_G$ is given by some homomorphism from the fundamental group of $X$ to $G$.

A holomorphic $G$--connection $(E_G,\, D)$ on $X$ is called \textit{reducible} if there is a 
proper parabolic subgroup $P\, \subsetneq\, G$, and a holomorphic $P$--connection $(E_P,\, D_P)$ on 
$X$, such that $(E_G,\, D)$ is the extension of structure group of $(E_P,\, D_P)$ using the
inclusion of $P$ in $G$. Note that there is a pair $(E_P,\, D_P)$ satisfying this condition
if and only if $(E_G,\, D)$ is given by a homomorphism from the fundamental group of $X$ to $P$.
A holomorphic $G$--connection $(E_G,\, D)$ on $X$ is called \textit{irreducible} if it
is not reducible.

Let ${\mathcal M}_X(G)$ denote the moduli space of holomorphic $G$--connections on $X$ \cite{Ni},
\cite{Si1}, \cite{Si2}. Two holomorphic $G$--connections $(E_G,\, D)$ and $(E'_G,\, D')$ are called
$S$--equivalent if there is a parabolic subgroup $P\, \subset\, G$ such that
\begin{enumerate}
\item there are holomorphic $P$--connections $(E_P,\, D_P)$ and $(E'_P,\, D'_P)$ such that
$(E_G,\, D)$ (respectively, $(E'_G,\, D')$) is the extension of structure group of
$(E_P,\, D_P)$ (respectively, $(E'_P,\, D'_P)$) using the
inclusion of $P$ in $G$, and

\item the holomorphic $P/R_u(P)$--connections $(E_P/R_u(P),\, \widetilde{D}_P)$ and
$(E'_P/R_u(P),\, \widetilde{D}'_P)$ are isomorphic, where $\widetilde{D}_P$
(respectively, $\widetilde{D}'_P$) is the connection on the principal
$L(P)$--bundle $E_P/R_u(P)$ (respectively, $E'_P/R_u(P)$) induced by
$D_P$ (respectively, $D'_P$). (Note that if $E_H$ is a principal $H$--bundle and $N$ is a normal subgroup
of $H$, then the quotient $E_H/N$ is a principal $H/N$--bundle.)
\end{enumerate}
The points of the moduli space ${\mathcal M}_X(G)$ parametrize all the $S$--equivalence classes of
holomorphic $G$--connections.

The moduli space ${\mathcal M}_X(G)$ is a reduced normal quasiprojective scheme defined over
$\mathbb C$. Its connected
components are parametrized by the torsion elements of the fundamental group $\pi_1(G)$.
Each connected component of ${\mathcal M}_X(G)$ is irreducible of dimension
$2\cdot \dim G/Z_G\cdot (g-1)+2g\cdot \dim Z_G\,=\, 2\cdot \dim G\cdot (g-1)+2\cdot \dim Z_G$ if $g\, \geq\, 2$,
where $Z_G$ is the center of $G$. The moduli space ${\mathcal M}_X(G)$ is
a singleton if $g\,=\, 0$; indeed, it is the trivial principal $G$--bundle equipped
with the trivial connection, because ${\mathbb C}{\mathbb P}^1$ is simply connected.

Let
$$
{\mathcal M}^0_X(G)\, \subset\, {\mathcal M}_X(G)
$$
be the moduli space of irreducible holomorphic $G$--connections on $X$. This ${\mathcal M}^0_X(G)$
is a Zariski open subset of ${\mathcal M}_X(G)$; it is also dense
if $g\, \geq\, 2$. Note that if two
irreducible holomorphic $G$--connections are $S$--equivalent, then they are actually isomorphic.
All the singular points of ${\mathcal M}^0_X(G)$ have finite quotient (orbifold)
singularity.

Henceforth, we shall always assume that $g\, \geq\, 2$.

The $C^\infty$ oriented (real) surface underlying $X$ will be denoted by $X_0$; this notation
will be used when the complex structure of $X$ is not relevant. Fix a point $x_0\, \in\, X_0$.
Let
$$
{\mathcal R}(G)\,:=\, \text{Hom}(\pi_1(X_0, x_0),\, G)/\!\!/G
$$
be the $G$--character variety associated to $X_0$. Since the group $\pi_1(X_0, x_0)$ is
finitely presented, using the complex algebraic structure of $G$, the moduli space
${\mathcal R}(G)$ becomes an affine scheme defined over $\mathbb C$. In fact, each connected
component of ${\mathcal R}(G)$ is an irreducible
normal affine variety defined over $\mathbb C$. The connected
components of ${\mathcal R}(G)$ are parametrized by the torsion elements of $\pi_1(G)$.
The moduli space ${\mathcal R}(G)$ also coincides with the moduli space of flat
principal $G$--bundles on $X_0$.

The Riemann--Hilbert correspondence produces a biholomorphism
\begin{equation}\label{e4}
\Phi\,:\, {\mathcal M}_X(G)\, \longrightarrow\, {\mathcal R}(G)\, .
\end{equation}
This map $\Phi$ sends a holomorphic $G$--connection $(E_G,\, D)$ to the monodromy
of the flat connection on $E_G$ defined by $D$ and the holomorphic structure of $E_G$.

\begin{proposition}\label{prop1}
The complex scheme ${\mathcal M}_X(G)$ is not affine. In particular,
the two complex schemes ${\mathcal M}_X(G)$ and ${\mathcal R}(G)$ are not
algebraically isomorphic.
\end{proposition}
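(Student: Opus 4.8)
The plan is to exhibit a non-constant global regular function on ${\mathcal M}_X(G)$ whose zero locus fails to be cut out by finitely many equations, or—more efficiently—to find inside ${\mathcal M}_X(G)$ a complete curve that is not a point. Recall that an affine scheme contains no complete connected positive-dimensional subvariety, since every regular function restricts to a constant on such a subvariety and regular functions separate points of an affine scheme. So it suffices to produce a non-constant proper morphism from a complete variety into ${\mathcal M}_X(G)$.

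The natural source of such complete subvarieties is the forgetful map $(E_G,\,D)\,\longmapsto\, E_G$. Indeed, if $E_G$ is a fixed holomorphic principal $G$--bundle admitting a holomorphic connection $D_0$, then the set of all holomorphic connections on $E_G$ is the affine space $D_0 + H^0(X,\, \operatorname{ad}(E_G)\otimes K_X)$ — an affine space, not complete, so this does not immediately help. Instead I would look at a subfamily where the bundle varies. Concretely, fix $G\,=\,\mathrm{GL}_n$ first (the general case reduces to this by extension of structure group, or is handled by the same mechanism using a faithful representation). Take a nontrivial extension of flat bundles, i.e. consider the projective line ${\mathbb P}(\mathrm{Ext})$ of nonzero extension classes between two fixed irreducible flat $G$--connections of the same "type"; because the underlying local systems all sit in a single algebraic family parametrized by ${\mathbb P}^1$, and each carries a holomorphic connection (the flat connection is holomorphic), we get a morphism ${\mathbb P}^1\,\longrightarrow\,{\mathcal M}_X(G)$. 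The key point to check is that this morphism is \emph{non-constant}: distinct extension classes give $S$--inequivalent (indeed non-isomorphic as connections) objects, which follows from the fact that a non-split extension of non-isomorphic flat connections is determined up to isomorphism by its extension class modulo scalars.

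An alternative, perhaps cleaner, route avoids extensions: use the biholomorphism $\Phi$ of \eqref{e4} together with the known structure of ${\mathcal R}(G)$. Since ${\mathcal R}(G)$ is affine and $\Phi$ is a biholomorphism, ${\mathcal M}_X(G)$ is certainly a complex-analytic Stein space, so the obstruction to being affine is purely algebraic. The cleanest argument: show that the ring of \emph{global algebraic} functions on ${\mathcal M}_X(G)$ is too small, e.g. that every global regular function on ${\mathcal M}_X(G)$ is constant on each connected component. This can be seen from the forgetful map to the moduli stack (or coarse space) of semistable bundles of degree zero: a holomorphic $G$--connection forces the underlying bundle to be semistable of degree zero (its rational Chern classes vanish and it is a flat bundle), so there is a morphism ${\mathcal M}_X(G)\,\longrightarrow\,{\mathcal M}_X^{ss}(G)$ to the moduli space of semistable $G$--bundles of trivial topological type, which is projective. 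This morphism is dominant with positive-dimensional projective image fibered over affine-space fibers; pulling back any nonconstant regular function from ${\mathcal M}_X(G)$ and using properness of the target's relevant subvarieties, one concludes there are no nonconstant global regular functions, while $\dim {\mathcal M}_X(G)\,>\,0$, contradicting affineness.

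\textbf{Main obstacle.} The delicate point in either approach is \emph{algebraicity and properness of the auxiliary family or map}: I must ensure that the ${\mathbb P}^1$ of extension classes (resp. the forgetful morphism to $\mathcal{M}^{ss}_X(G)$) is a genuine \emph{morphism of schemes} into ${\mathcal M}_X(G)$, not merely a holomorphic map, and that its image is positive-dimensional and complete. For the extension approach this requires checking that all the extended bundles-with-connection do admit holomorphic connections compatibly in families (they do, being flat) and that the $S$--equivalence relation does not collapse the ${\mathbb P}^1$ to a point — the latter is where one uses that the sub- and quotient connections are irreducible and non-isomorphic, so the associated graded (hence the $S$--equivalence class) still varies, or more simply that a non-split self-extension is \emph{not} $S$--equivalent to the split one because $S$--equivalence of a nonsplit extension with irreducible non-isomorphic factors forces isomorphism. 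Once a single non-constant complete curve $C\,\hookrightarrow\,{\mathcal M}_X(G)$ is in hand, the conclusion is immediate: a regular function on an affine scheme restricts to a non-constant regular function on any positive-dimensional complete subvariety, which is impossible; hence ${\mathcal M}_X(G)$ is not affine, and \emph{a fortiori} it is not algebraically isomorphic to the affine scheme ${\mathcal R}(G)$, so the biholomorphism $\Phi$ cannot be algebraic.
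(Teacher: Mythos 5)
Both routes you propose have genuine gaps, and the first is blocked in principle. Since $\Phi$ is a biholomorphism onto the affine (hence Stein) scheme ${\mathcal R}(G)$, the moduli space ${\mathcal M}_X(G)$ is Stein as a complex space and therefore contains \emph{no} positive-dimensional compact analytic subvariety; there is no complete curve in ${\mathcal M}_X(G)$ to be found, which is exactly why non-affineness here is a subtle, purely algebraic phenomenon rather than one detectable by complete subvarieties. Your specific candidate also fails on its own terms: by the very definition of $S$--equivalence, a nonsplit extension of flat connections is identified with its associated graded (split) object, and the associated graded is the \emph{same} for every class in ${\mathbb P}(\mathrm{Ext})$ --- it does not ``still vary'' as you assert --- so the whole ${\mathbb P}^1$ collapses to a single point of the moduli space. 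Your second route also has a false premise: the underlying bundle of a holomorphic connection need not be semistable (for $\mathrm{SL}(2,{\mathbb C})$ the uniformizing flat bundle is a nonsplit extension of $K_X^{-1/2}$ by $K_X^{1/2}$, which is unstable), so the forgetful morphism to ${\mathcal M}^{ss}_X(G)$ is not defined on all of ${\mathcal M}_X(G)$; and even on the locus where it is defined, the key claim that every global regular function is constant is asserted rather than proved --- for $\mathrm{SL}(r,{\mathbb C})$ this is a nontrivial theorem of \cite{BR}, not a formal consequence of the fibration structure.

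The paper's actual argument sidesteps all of this by reducing to rank one: an algebraic one-parameter subgroup $\rho\,:\,{\mathbb C}^*\,\longrightarrow\, G$ induces an algebraic embedding ${\mathcal M}_X({\mathbb C}^*)\,\hookrightarrow\,{\mathcal M}_X(G)$; the Abel--Jacobi map identifies ${\mathcal M}_X({\mathbb C}^*)$ with the moduli space of rank one integrable connections on $\mathrm{Pic}^0(X)$, which by \cite{BHR} admits no non-constant algebraic functions; a positive-dimensional locally closed subscheme of an affine scheme always carries non-constant regular functions, so ${\mathcal M}_X(G)$ cannot be affine. If you want to repair your proposal, the essential missing ingredient is precisely such a positive-dimensional subvariety with no non-constant regular functions, and producing one requires an input like \cite{BHR} or \cite{BR} rather than a complete curve or the forgetful map.
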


\begin{proof}
Since ${\mathcal R}(G)$ is an affine scheme, the first statement in the proposition implies
the second statement.

The multiplicative group of nonzero complex numbers will be denoted by ${\mathbb C}^*$.
Let ${\mathcal M}_X({\mathbb C}^*)$ be the moduli space of holomorphic ${\mathbb C}^*$--connections
on $X$. Fix an algebraic embedding
\begin{equation}\label{e6}
\rho\, :\, {\mathbb C}^*\, \longrightarrow\, G\, .
\end{equation}
It produces an algebraic embedding
\begin{equation}\label{e5}
\widehat{\rho}\, :\, {\mathcal M}_X({\mathbb C}^*)\, \longrightarrow\,
{\mathcal M}_X(G)
\end{equation}
which sends a holomorphic ${\mathbb C}^*$--connection $(L, \, D_L)$ on $X$
to the holomorphic $G$--connection $(L(\rho), \, D_L(\rho))$, where $L(\rho)$ is the
holomorphic principal $G$--bundle obtained by extending the structure group of $L$
using the homomorphism $\rho$ in \eqref{e6}, and $D_L(\rho)$ is the holomorphic connection
on $L(\rho)$ induced by the connection $D_L$.

Let $\text{Pic}^0(X)$ denote the Jacobian of $X$ that parametrizes all isomorphism
classes of holomorphic line bundles on $X$ of degree zero. Fix a point $x_0\, \in\, X$. Let
$$
\text{AJ}_X\, :\, X\, \longrightarrow\, \text{Pic}^0(X)\, ,\ \ x\, \longmapsto\,
{\mathcal O}_X(x-x_0)
$$
be the Abel--Jacobi embedding. Let ${\mathcal M}(\text{Pic}^0(X))$ be the moduli space of
integrable holomorphic connections on $\text{Pic}^0(X)$ of rank one. We have an algebraic morphism
$$
\psi\, :\, {\mathcal M}(\text{Pic}^0(X))\, \longrightarrow\,
{\mathcal M}_X({\mathbb C}^*)\, ,\ \ (L,\, D_L)\, \longmapsto\, (\text{AJ}^*_XL,\,
\text{AJ}^*_X D_L)\, ;
$$
here we have identified the principal ${\mathbb C}^*$--bundles with line bundles
using the multiplicative action of ${\mathbb C}^*$ on ${\mathbb C}$. It is known that
$\psi$ is an isomorphism. A quick way to see this is as follows: the homomorphism of fundamental groups
$$\text{AJ}_{X,*}\, :\,
\pi_1(X,\, x_0)\, \longrightarrow\, \pi_1(\text{Pic}^0(X),\, {\mathcal O}_X)$$
induced by $\text{AJ}_X$ is the abelianization of $\pi_1(X,\, x_0)$, and hence it identifies
the characters of $\pi_1(\text{Pic}^0(X),\, {\mathcal O}_X)$ 
with the characters of $\pi_1(X,\, x_0)$.

It is known that there are no non-constant algebraic functions on ${\mathcal M}(\text{Pic}^0(X))$
\cite[p.~1541, Proposition 4.1]{BHR}. Since $\psi$ is an algebraic isomorphism, it follows
that ${\mathcal M}_X({\mathbb C}^*)$ does not admit any non-constant algebraic function.
Therefore, in view of the embedding $\widehat{\rho}$ in \eqref{e5} we conclude that
the scheme ${\mathcal M}_X(G)$ is not affine.
\end{proof}

\begin{remark}\label{rem0}
Proposition \ref{prop1} is known for the special case of $G\,=\, \text{SL}(r, {\mathbb C})$
for all $r\, \geq\, 2$. In fact, there are no non-constant algebraic functions
on ${\mathcal M}_X(\text{SL}(r, {\mathbb C}))$ \cite[p.~803, Theorem 4.5]{BR}.
\end{remark}

\section{A symplectic form on the moduli spaces}

\subsection{A two-form on the moduli space of holomorphic $G$-connections}

Take a point $$(E_G,\, D)\, \in\, {\mathcal M}_X(G)\, .$$ As before, let $\text{ad}(E_G)\,=\,
E_G\times^G{\mathfrak g}\,\longrightarrow\, X$ be the adjoint vector bundle for the principal
$G$--bundle $E_G$. The connection on $\text{ad}(E_G)$ induced by the connection $D$ on $E_G$ 
will be denoted by ${\mathcal D}$. We note that $\mathcal D$ is a holomorphic differential
operator $\text{ad}(E_G)\,\longrightarrow\, \text{ad}(E_G)\otimes K_X$ of order one satisfying
the Leibniz identity which says that $${\mathcal D}(fs)\,=\, f\cdot {\mathcal D}(s) +
s\otimes df$$ for any locally defined holomorphic section $s$ of $\text{ad}(E_G)$ and any
locally defined holomorphic function $f$ on $X$.

We have a two term complex of coherent sheaves on $X$
\begin{equation}\label{e7}
{\mathcal C}_{\bullet}\, :\,
{\mathcal C}_{0}\,=\, \text{ad}(E_G) \, \stackrel{\mathcal D}{\longrightarrow}\,
{\mathcal C}_{1}\,=\, \text{ad}(E_G)\otimes K_X\, ,
\end{equation}
where ${\mathcal C}_i$ is at the $i$-th position.
The infinitesimal deformations of the holomorphic $G$--connection $(E_G,\, D)$ are parametrized
by the hypercohomology ${\mathbb H}^1({\mathcal C}_{\bullet})$ \cite[p.~606, Theorem 4.2]{Ni}.

Since $G$ is reductive, its Lie algebra $\mathfrak g$ admits a $G$--invariant nondegenerate symmetric
bilinear form. Fix such a form
\begin{equation}\label{B}
B\, \in\, \text{Sym}^2({\mathfrak g}^*)\, .
\end{equation}
Since $B$ is
$G$--invariant, it produces a fiberwise nondegenerate symmetric bilinear form
\begin{equation}\label{Bn}
\widetilde{B}\, \in\, H^0(X,\, \text{Sym}^2(\text{ad}(E_G)^*))
\end{equation}
on the vector bundle $\text{ad}(E_G)$. This $\widetilde{B}$ produces an isomorphism
of $\text{ad}(E_G)$ with its dual $\text{ad}(E_G)^*$. Whenever $\text{ad}(E_G)$ will be
identified with $\text{ad}(E_G)^*$, it should understood that $\widetilde{B}$ is being
used. The Serre dual complex for ${\mathcal C}_{\bullet}$ is ${\mathcal C}_{\bullet}$ itself
because $$(\text{ad}(E_G)\otimes K_X)^*\otimes K_X\,=\, \text{ad}(E_G)\ \ \text{ and }\ \
\text{ad}(E_G)^*\otimes K_X\,=\, \text{ad}(E_G)\otimes K_X\, .$$ Hence Serre duality in this case
produces an isomorphism
\begin{equation}\label{e8}
\beta\, :\, {\mathbb H}^1({\mathcal C}_{\bullet})\, \longrightarrow\,
{\mathbb H}^1({\mathcal C}_{\bullet})^*\, .
\end{equation}
To describe $\beta$ in \eqref{e8} more explicitly, consider the
tensor product of complexes $\mathcal{C}_{\bullet}\otimes \mathcal{C}_{\bullet}$:
$$
(\mathcal{C}_{\bullet}\otimes \mathcal{C}_{\bullet})_0\,=\,
\text{ad}(E_G)\otimes \text{ad}(E_G)\, \stackrel{({\mathcal D}\otimes {\rm Id})+
({\rm Id}\otimes{\mathcal D})}{\longrightarrow}\,
(\mathcal{C}_{\bullet}\otimes \mathcal{C}_{\bullet})_1
$$
$$
=\, ((\text{ad}(E_G)\otimes K_X)\otimes \text{ad}(E_G))\oplus
(\text{ad}(E_G)\otimes (\text{ad}(E_G)\otimes K_X))
$$
$$
\stackrel{({\rm Id}\otimes {\mathcal D})-
({\mathcal D}\otimes {\rm Id})}{\longrightarrow}\,
(\mathcal{C}_{\bullet}\otimes \mathcal{C}_{\bullet})_2\,=\,
(\text{ad}(E_G)\otimes K_X)\otimes (\text{ad}(E_G)\otimes K_X)\, ,
$$
where $(\mathcal{C}_{\bullet}\otimes \mathcal{C}_{\bullet})_i$ is at the $i$-th position.
We also have the homomorphisms
$$
\gamma_0\, :\, \text{ad}(E_G)\otimes \text{ad}(E_G)\, \longrightarrow\, {\mathcal O}_X\, ,
\ \ a\otimes b \, \longmapsto\, {\widetilde B}(a\otimes b)\, ,
$$
where ${\widetilde B}$ is constructed in \eqref{Bn}, and
$$
\gamma_1\, :\, ((\text{ad}(E_G)\otimes K_X)\otimes \text{ad}(E_G))\oplus
(\text{ad}(E_G)\otimes (\text{ad}(E_G)\otimes K_X))
$$
$$
\longrightarrow\, K_X\, , \ \ (a_1\otimes b_1)\oplus (a_2\otimes b_2)\, \longmapsto\,
{\widetilde B}(a_1\otimes b_1)+{\widetilde B}(a_2\otimes b_2)\, .
$$

Let ${\mathbb K}$ denote the complex of coherent sheaves on $X$
$${\mathcal O}_X\,\stackrel{d}{\longrightarrow}\, K_X
\,\longrightarrow\, 0\, ,$$
where ${\mathcal O}_X$ and $K_X$ are at the $0$-th position and $1$-position respectively, and
$d$ is the de Rham differential.

It is straight-forward to check that $\gamma_1\circ (({\mathcal D}\otimes {\rm Id})+
({\rm Id}\otimes{\mathcal D}))\,=\, d\circ\gamma_0$. Consequently,
the above homomorphisms $\gamma_0$ and $\gamma_1$ produce a homomorphism $\gamma$ of complexes
$$
\begin{matrix}
\mathcal{C}_{\bullet}\otimes \mathcal{C}_{\bullet} & : &
(\mathcal{C}_{\bullet}\otimes \mathcal{C}_{\bullet})_0 & {\longrightarrow} &
(\mathcal{C}_{\bullet}\otimes \mathcal{C}_{\bullet})_1 & \longrightarrow &
(\mathcal{C}_{\bullet}\otimes \mathcal{C}_{\bullet})_2\\
~\Big\downarrow\gamma && ~\,\,\,\Big\downarrow\gamma_0 && ~\,\,\,\Big\downarrow\gamma_1 && \Big\downarrow\\
{\mathbb K} & : & {\mathcal O}_X & \stackrel{d}{\longrightarrow} & K_X & \longrightarrow & 0
\end{matrix}
$$
{}From this we have the composition of homomorphisms of hypercohomologies
\begin{equation}\label{e9}
{\mathbb H}^1({\mathcal C}_{\bullet})\otimes
{\mathbb H}^1({\mathcal C}_{\bullet})\, \longrightarrow\, {\mathbb H}^2(\mathcal{C}_{\bullet}\otimes \mathcal{C}_{\bullet})
\, \stackrel{\gamma_*}{\longrightarrow}\,{\mathbb H}^2({\mathbb K})\, ,
\end{equation}
where $\gamma_*$ is induced by the above homomorphism $\gamma$ of complexes.

We will show that
\begin{equation}\label{e9a}
{\mathbb H}^2({\mathbb K})\,=\, H^1(X,\, K_X)\,=\, {\mathbb C}\, .
\end{equation}
For this first consider the following short exact sequence of complexes of sheaves on $X$
$$
\begin{matrix}
&& 0 & & 0\\
&& \Big\downarrow & & \Big\downarrow \\
&& 0 & \longrightarrow & K_X\\
&& \Big\downarrow && \,\,\,\,\,\,\,\Big\downarrow=\\
{\mathbb K} & : & {\mathcal O}_X & \stackrel{d}{\longrightarrow} & K_X\\
&& \,\,\,\,\,\,\,\Big\downarrow= && \Big\downarrow\\
&& {\mathcal O}_X & \longrightarrow & 0\\
& & \Big\downarrow & & \Big\downarrow \\
& & 0 & & 0
\end{matrix}
$$
This produces a long exact sequence of hypercohomologies
\begin{equation}\label{e9b}
\longrightarrow\, H^1(X,\, {\mathcal O}_X)\, \stackrel{d_*}{\longrightarrow}\, H^1(X,\, K_X)
\, \longrightarrow\, {\mathbb H}^2({\mathbb K}) \, \longrightarrow\, H^2(X,\, {\mathcal O}_X)\,=\, 0\, ,
\end{equation}
where the homomorphism $d_*$ is the homomorphism of cohomologies induced by $d$. By Serre duality,
we have
\begin{equation}\label{int0}
H^1(X,\, K_X)\,=\, H^0(X,\, {\mathcal O}_X)^*\,=\, \mathbb C\, .
\end{equation}
To describe the isomorphism $H^1(X,\, K_X)\,\stackrel{\sim}{\longrightarrow}\, \mathbb C$
in \eqref{int0} explicitly, first note that using Dolbeault approach,
$$
H^1(X,\, K_X)\,=\, \frac{C^\infty(X,\, \Omega^{1,1}_X)}{\overline{\partial}(C^\infty(X,\, \Omega^{1,0}_X))}\, .
$$
Now consider the homomorphism
\begin{equation}\label{int}
C^\infty(X,\, \Omega^{1,1}_X)\, \longrightarrow\, {\mathbb C}\, ,\ \ \omega\, \longmapsto\, \int_X\omega\, .
\end{equation}
{}From Stokes' theorem we know that the homomorphism in \eqref{int} vanishes on
$$\overline{\partial}(C^\infty(X,\, \Omega^{1,0}_X))\,=\, d(C^\infty(X,\, \Omega^{1,0}_X))$$
(the above equality is a consequence of the fact that ${\partial}(C^\infty(X,\, \Omega^{1,0}_X))\,=\, 0$
as the sheaf of $(2,\, 0)$-forms on $X$ is the zero sheaf). Therefore,
the homomorphism in \eqref{int} produces a homomorphism
\begin{equation}\label{int2}
H^1(X,\, K_X)\,=\, \frac{C^\infty(X,\, \Omega^{1,1}_X)}{\overline{\partial}(C^\infty(X,\, \Omega^{1,0}_X))}
\, \longrightarrow\, {\mathbb C}\, .
\end{equation}
This homomorphism in \eqref{int2} is an isomorphism, and it coincides with the one in \eqref{int0}.

Using Dolbeault approach,
$$
H^1(X,\, {\mathcal O}_X)\,=\,
\frac{C^\infty(X,\, \Omega^{0,1}_X)}{\overline{\partial}(C^\infty(X,\, {\mathbb C}))}\, .
$$
For any $\alpha \,\in\, C^\infty(X,\, \Omega^{0,1}_X)$, from Stokes' theorem we know that

$$
\int_X d(\alpha)\, =\, 0\, .
$$
Consequently, the homomorphism $d_*$ in \eqref{e9b} is the zero homomorphism; here we are
using the description of $H^1(X,\, K_X)$ given by \eqref{int2}. Hence
\eqref{e9a} follows from \eqref{e9b}.

Combining \eqref{e9} and \eqref{e9a} we get a homomorphism from ${\mathbb H}^1({\mathcal C}_{\bullet})\otimes
{\mathbb H}^1({\mathcal C}_{\bullet})$ to $\mathbb C$. Let
\begin{equation}\label{e10}
\Theta(E_G,D)\, :\, {\mathbb H}^1({\mathcal C}_{\bullet})\otimes
{\mathbb H}^1({\mathcal C}_{\bullet})\, \longrightarrow\, \mathbb C
\end{equation}
denote this bilinear form. The bilinear form $\Theta(E_G,D)$
in \eqref{e10} produces $\beta$ in \eqref{e8} using the
equation
\begin{equation}\label{f1}
\beta (\omega_1)(\omega_2) \,=\, \Theta(E_G,D)(\omega_1\otimes \omega_2)
\end{equation}
for all $\omega_1,\, \omega_2\, \in\, {\mathbb H}^1({\mathcal C}_{\bullet})$.

{}From the above
construction of $\Theta(E_G,D)$ it is evident that the bilinear form $\Theta(E_G,D)$ is 
anti-symmetric. Since $\beta$ is an isomorphism, from \eqref{f1} it follows immediately that 
the anti-symmetric pairing $\Theta(E_G,D)$ is nondegenerate.

As before,
$$
{\mathcal M}^0_X(G)\, \subset\, {\mathcal M}_X(G)
$$
is the moduli space of irreducible holomorphic
$G$--connections. We noted above that
$$T_{(E_G, D)} {\mathcal M}^0_X(G)\,=\, {\mathbb H}^1({\mathcal C}_{\bullet})$$
for $(E_G,\, D)\, \in\, {\mathcal M}^0_X(G)$. The construction of $\Theta(E_G,D)$ in \eqref{e10}
evidently works for families of holomorphic $G$--connections.
Therefore, the point-wise construction of $\Theta(E_G,D)$ produces an algebraic two-form
on ${\mathcal M}^0_X(G)$. So we have the following:

\begin{lemma}\label{lem1}
The moduli space ${\mathcal M}^0_X(G)$ has a natural algebraic two-form
$$
\Theta\, \in\, H^0({\mathcal M}^s_X(G),\, \Omega^2_{{\mathcal M}^s_X(G)})
$$
which is $\Theta(E_G,D)$ in \eqref{e10} at every $(E_G,\, D)\, \in\, {\mathcal M}^0_X(G)$.
For every $y\, \in\, {\mathcal M}^0_X(G)$, this two-form $\Theta(y)$ on
on $T_y {\mathcal M}^0_X(G)$ is nondegenerate.
\end{lemma}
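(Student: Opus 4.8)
The plan is to observe that, for a fixed point $(E_G,\, D)\,\in\,{\mathcal M}^0_X(G)$, the work of Section 3 has already done almost everything the lemma asserts: the bilinear form $\Theta(E_G,D)$ of \eqref{e10} is well defined on ${\mathbb H}^1({\mathcal C}_{\bullet})$, it is anti-symmetric by construction, and — since it induces the Serre duality isomorphism $\beta$ of \eqref{e8} through \eqref{f1} — it is nondegenerate; moreover, by \cite[p.~606, Theorem 4.2]{Ni} together with the fact recalled in Section 2 that an irreducible holomorphic $G$-connection has only central automorphisms, one has a canonical identification ${\mathbb H}^1({\mathcal C}_{\bullet})\,=\, T_{(E_G,D)}{\mathcal M}^0_X(G)$. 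Thus the only genuinely new point is that these pointwise forms fit together into a single \emph{algebraic} section of $\Omega^2_{{\mathcal M}^0_X(G)}$; granting this, the final nondegeneracy assertion of the lemma is precisely the pointwise statement just recalled.

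To produce the algebraic form I would relativize the construction of Section 3. Let $q\, :\, X\times S\,\longrightarrow\, S$ be the projection, where $S$ carries a family $({\mathcal E}_G,\,{\mathcal D})$ of holomorphic $G$-connections; concretely $S$ may be taken to be a local universal family over an \'etale chart $S\,\longrightarrow\,{\mathcal M}^0_X(G)$ (such a chart exists because the singularities of ${\mathcal M}^0_X(G)$ are finite-quotient), or one may work over the moduli stack throughout. On $X\times S$ one forms the relative adjoint bundle $\text{ad}({\mathcal E}_G)$, the relative connection operator ${\mathcal D}\, :\, \text{ad}({\mathcal E}_G)\,\longrightarrow\, \text{ad}({\mathcal E}_G)\otimes K_{X\times S/S}$, and hence the relative two-term complex ${\mathcal C}_\bullet^S$, which is the family version of \eqref{e7}. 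The relative hypercohomology sheaf ${\mathbb R}^1 q_*{\mathcal C}_\bullet^S$ has fiber ${\mathbb H}^1({\mathcal C}_{\bullet})$ over each point of $S$. The key technical input is a base-change statement: because $\dim {\mathbb H}^0({\mathcal C}_{\bullet})$ and $\dim {\mathbb H}^2({\mathcal C}_{\bullet})$ are both equal to $\dim Z_G$, hence constant on ${\mathcal M}^0_X(G)$ (the first since the infinitesimal automorphisms of an irreducible connection are exactly the central ones, the second by the Serre duality of \eqref{e8}), $\dim {\mathbb H}^1({\mathcal C}_{\bullet})$ is constant as well; the standard cohomology-and-base-change argument then shows that ${\mathbb R}^1 q_*{\mathcal C}_\bullet^S$ is locally free and commutes with arbitrary base change, and for a (local) universal family it is canonically the pullback of $T{\mathcal M}^0_X(G)$.

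Next I would run the rest of the construction relatively. The fiberwise form $\widetilde B$ of \eqref{Bn} yields a morphism of complexes $\gamma\, :\, {\mathcal C}_\bullet^S\otimes_{{\mathcal O}_{X\times S}}{\mathcal C}_\bullet^S\,\longrightarrow\, {\mathbb K}^S$, where ${\mathbb K}^S\, :\, {\mathcal O}_{X\times S}\,\stackrel{d_{X\times S/S}}{\longrightarrow}\, K_{X\times S/S}$ is the relative de Rham complex; relative integration along the fibres gives an isomorphism ${\mathbb R}^2 q_*{\mathbb K}^S\,\cong\,{\mathcal O}_S$, the family version of \eqref{e9a}, the vanishing of the relevant connecting homomorphism being inherited from the absolute computation \eqref{e9a} by base change. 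Composing the K\"unneth (cup-product) map ${\mathbb R}^1 q_*{\mathcal C}_\bullet^S\otimes {\mathbb R}^1 q_*{\mathcal C}_\bullet^S\,\longrightarrow\, {\mathbb R}^2 q_*({\mathcal C}_\bullet^S\otimes {\mathcal C}_\bullet^S)$ with $\gamma_*$ and the trace isomorphism, exactly as in \eqref{e9}, gives a morphism ${\mathbb R}^1 q_*{\mathcal C}_\bullet^S\otimes {\mathbb R}^1 q_*{\mathcal C}_\bullet^S\,\longrightarrow\, {\mathcal O}_S$, i.e. an algebraic section of $\bigwedge^2({\mathbb R}^1 q_*{\mathcal C}_\bullet^S)^*\,=\,\Omega^2_S$. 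Every operation used — relative hypercohomology, K\"unneth, relative Serre duality/trace, and $\gamma$ — is algebraic and compatible with base change, so the resulting two-form is algebraic and its value at any $s\,\in\, S$ is $\Theta(E_{G,s},D_s)$. The same base-change compatibility forces the forms obtained from overlapping charts (and the forms obtained from different choices of universal family) to agree, so they descend to a single algebraic form $\Theta\,\in\, H^0({\mathcal M}^0_X(G),\,\Omega^2_{{\mathcal M}^0_X(G)})$ whose value at each $(E_G,\, D)$ is $\Theta(E_G,D)$; nondegeneracy at every $y$ then follows from the pointwise statement, completing the proof.

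The step I expect to be the main obstacle is not analytic but bookkeeping: ${\mathcal M}^0_X(G)$ need not carry a global Poincar\'e family and has orbifold points, so the construction has to be phrased either on the moduli stack or via \'etale-local universal families with a descent check, and the identification ${\mathbb R}^1 q_*{\mathcal C}_\bullet^S\,=\,T{\mathcal M}^0_X(G)$ together with the base-change property rests on the constancy of $\dim {\mathbb H}^0({\mathcal C}_{\bullet})$ and $\dim {\mathbb H}^2({\mathcal C}_{\bullet})$, which is exactly where irreducibility (simplicity modulo the center of $G$) is used. Everything else is a routine relativization of the already-established pointwise construction.
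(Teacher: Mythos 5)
Your proposal is correct and follows essentially the same route as the paper: anti\nobreakdash-symmetry is read off from the construction, nondegeneracy comes from the fact that the pairing realizes the Serre duality isomorphism $\beta$ of \eqref{e8} via \eqref{f1}, and algebraicity comes from carrying out the construction of \eqref{e10} in families. The paper compresses the family step into the single assertion that ``the construction of $\Theta(E_G,D)$ evidently works for families,'' whereas you supply the standard supporting details (relative hypercohomology, base change via the constancy of $\dim {\mathbb H}^0$ and $\dim {\mathbb H}^2$, and descent from \'etale-local universal families); these are consistent with, and a legitimate expansion of, the paper's argument.
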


\subsection{Goldman symplectic form on the character variety}\label{se3.2}

A homomorphism $\alpha \,\in\, \text{Hom}(\pi_1(X_0, x_0),\, G)$ is called \textit{irreducible}
if the image of $\alpha$ is not contained in some proper parabolic subgroup of $G$. Let
$$
{\mathcal R}^0(G)\, \subset\, {\mathcal R}(G)
$$
be the moduli space of irreducible representations; it is a Zariski open dense subset of
${\mathcal R}(G)$ (recall that $g\, \geq\,2$ by assumption). The biholomorphism $\Phi$ in \eqref{e4}
takes ${\mathcal M}^0_X(G)$ surjectively to ${\mathcal R}^0(G)$.

In \cite{Go}, Goldman constructed a complex symplectic form
\begin{equation}\label{e11}
\Theta_{\mathcal R}\, \in\, H^0({\mathcal R}^0(G),\, \Omega^2_{{\mathcal R}^0(G)})
\end{equation}
on ${\mathcal R}^0(G)$ which is in fact algebraic \cite[p.~208, Theorem]{Go} (see also
\cite{GHJW}). This symplectic form coincides with the symplectic form on the moduli space
of irreducible flat $G$--bundles on $X_0$ constructed in \cite{AB}. We shall briefly recall the description
of $\Theta_{\mathcal R}$ from the point of view of \cite{AB} (as before, $X_0$
denotes the $C^\infty$ oriented (real) surface underlying $X$).

The moduli space ${\mathcal R}^0(G)$ will always be identified with the
moduli space of irreducible flat $G$--bundles on $X_0$.

Take any irreducible flat $G$--bundle $(F_G,\,\nabla)\, \in\, {\mathcal R}^0(G)$. The flat 
connection on the adjoint bundle $\text{ad}(F_G)$ induced by the flat connection $\nabla$ on 
$F_G$ will be denoted by $\nabla^{\rm ad}$. So $\nabla^{\rm ad}$ is a $C^\infty$ differential 
operator of order one
$$
\nabla^{\rm ad}\, :\, \text{ad}(F_G)\, \longrightarrow\,\text{ad}(F_G)\otimes
T^*_{\mathbb R}X_0
$$
satisfying the Leibniz identity such that the curvature, namely the composition
$$\nabla^{\rm ad}\circ \nabla^{\rm ad}\, :\, \text{ad}(F_G)\, \longrightarrow\,\text{ad}(F_G)\otimes
\bigwedge\nolimits^2 T^*_{\mathbb R}X_0\, ,
$$
vanishes identically. Let
$$
\underline{\text{ad}}(F_G)\, \longrightarrow\, X_0
$$
be the $\mathbb C$--local system on $X_0$ given by the sheaf of flat sections of $\text{ad}(F_G)$ for
the flat connection $\nabla^{\rm ad}$.

We have
\begin{equation}\label{e12}
T_{(F_G,\nabla)} {\mathcal R}^0(G)\,=\, H^1(X_0,\, \underline{\text{ad}}(F_G))\, .
\end{equation}
As before, let $$\widetilde{B}\, \in\, C^\infty(X_0,\,
\text{Sym}^2(\text{ad}(F_G)^*))$$ be the fiber-wise nondegenerate symmetric bilinear form
on $\text{ad}(F_G)$ given by the form $B$ in \eqref{B}. Now consider the composition
\begin{equation}\label{e13}
H^1(X_0,\, \underline{\text{ad}}(F_G))\otimes H^1(X_0,\, \underline{\text{ad}}(F_G))\,
\longrightarrow\, H^2(X_0,\, \underline{\text{ad}}(F_G)\otimes \underline{\text{ad}}(F_G))
\, \stackrel{\widetilde B}{\longrightarrow}\, H^2(X_0,\,{\mathbb C})\,=\, {\mathbb C}\, ;
\end{equation}
note that the orientation of $X_0$ is used in identifying $H^2(X_0,\,{\mathbb C})$ with
$\mathbb C$. Using the identification in \eqref{e12}, the
pairing in \eqref{e13} coincides with $\Theta_{\mathcal R}(F_G,\nabla)\, \in\,
\Omega^2_{{\mathcal R}^0(G)}\vert_{(F_G,\nabla)}$ in \eqref{e11}. 

The pairing in \eqref{e13} can be explicitly described as follows. Consider the complex of
vector spaces
\begin{equation}\label{d1}
C^{\infty}(X_0,\, \text{ad}(F_G))\, \stackrel{\nabla^{\rm ad}}{\longrightarrow}\,
C^{\infty}(X_0,\, \text{ad}(F_G)\otimes T^*_{\mathbb R}X_0)
\, \stackrel{\nabla^{\rm ad}}{\longrightarrow}\,
C^{\infty}(X_0,\, \text{ad}(F_G)\otimes \bigwedge\nolimits^2 T^*_{\mathbb R}X_0)\, .
\end{equation}
For this complex, we have
\begin{equation}\label{e14}
\frac{\text{kernel}(\nabla^{\rm ad})}{\text{image} (\nabla^{\rm ad})}\,=\,
H^1(X_0,\, \underline{\text{ad}}(F_G))\, .
\end{equation}
The anti-symmetric bilinear form on $\text{kernel}(\nabla^{\rm ad})\, \subset\,
C^{\infty}(X_0,\, \text{ad}(F_G)\otimes T^*_{\mathbb R}X_0)$ defined by
\begin{equation}\label{o12}
\omega_1\otimes \omega_2\, \longrightarrow\, \int_{X_0} \widetilde{B}(\omega_1\wedge
\omega_2)\, \in\, \mathbb C
\end{equation}
descends to a bilinear form on the quotient space $\text{kernel}(\nabla^{\rm ad})/\text{image} 
(\nabla^{\rm ad})$ in \eqref{e14}. The bilinear form on $H^1(X_0,\, \underline{\text{ad}}(F_G))$
given by the latter pairing and the isomorphism in \eqref{e14} coincides with the bilinear from
on $H^1(X_0,\, \underline{\text{ad}}(F_G))$ constructed in \eqref{e13}.

\subsection{Equality of forms}

The biholomorphism ${\mathcal M}^0_X(G)\, \longrightarrow\, {\mathcal R}^0(G)$
obtained by restricting the map $\Phi$ in \eqref{e4} to ${\mathcal M}^0_X(G)\, \subset\,
{\mathcal M}_X(G)$ will be denoted by $\Phi_0$.

\begin{theorem}\label{thm1}
For the form $\Theta_{\mathcal R}$ in \eqref{e11}, the pullback $\Phi^*_0\Theta_{\mathcal R}$ coincides
with the form $\Theta$ on ${\mathcal M}^0_X(G)$ in Lemma \ref{lem1}.
\end{theorem}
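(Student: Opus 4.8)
The strategy is to compare the two symplectic forms through a common Dolbeault-type description of the tangent spaces, so that the Riemann--Hilbert identification becomes transparent at the level of cocycles. First I would recall that on ${\mathcal M}^0_X(G)$ the tangent space $T_{(E_G,D)}{\mathcal M}^0_X(G)$ is the first hypercohomology ${\mathbb H}^1({\mathcal C}_\bullet)$ of the complex in \eqref{e7}, and that this hypercohomology admits a Dolbeault model: one resolves each term ${\mathcal C}_i$ by its $(0,\bullet)$ Dolbeault complex, obtaining a double complex whose associated total complex computes ${\mathbb H}^\bullet({\mathcal C}_\bullet)$. Concretely, a class in ${\mathbb H}^1({\mathcal C}_\bullet)$ is represented by a pair $(\mu,\nu)$ with $\mu\in C^\infty(X,\,\mathrm{ad}(E_G)\otimes\Omega^{0,1}_X)$ and $\nu\in C^\infty(X,\,\mathrm{ad}(E_G)\otimes K_X)$ satisfying $\overline\partial\nu + {\mathcal D}\mu = 0$, modulo the image of $\overline\partial$ on $C^\infty(X,\mathrm{ad}(E_G))$ together with the obvious coboundary. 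In this picture, writing $\theta = D + \overline\partial_{E_G}$ for the flat $C^\infty$ connection on $E_G$ underlying $(E_G,D)$, the sum $\mu+\nu$ is exactly a $\theta^{\mathrm{ad}}$-closed $\mathrm{ad}(F_G)$-valued $1$-form, where $F_G$ is the corresponding flat bundle; so the Dolbeault model of ${\mathbb H}^1({\mathcal C}_\bullet)$ is literally identified with the de Rham model $H^1(X_0,\,\underline{\mathrm{ad}}(F_G))$ of \eqref{e14}, and this identification is precisely the derivative of $\Phi_0$. I would state this as a lemma, citing the Riemann--Hilbert description of the tangent map (it is standard; see the reference \cite{Ni} already invoked for the hypercohomology description).

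Next I would unwind what the form $\Theta(E_G,D)$ of \eqref{e10} becomes in the Dolbeault model. Tracing through the cup product ${\mathbb H}^1\otimes{\mathbb H}^1\to{\mathbb H}^2({\mathcal C}_\bullet\otimes{\mathcal C}_\bullet)$ followed by $\gamma_*$ and the identification ${\mathbb H}^2({\mathbb K})=H^1(X,K_X)=\mathbb C$ via $\omega\mapsto\int_X\omega$ of \eqref{int}: given representatives $(\mu_1,\nu_1)$ and $(\mu_2,\nu_2)$, the cup product in Dolbeault terms is computed by wedging forms and applying $\widetilde B$, and the resulting degree-two class in the total complex of ${\mathbb K}$ is represented by the $(1,1)$-form $\widetilde B(\mu_1\wedge\nu_2) - \widetilde B(\nu_1\wedge\mu_2)$ (up to the universal sign built into the tensor product of complexes and the sign in $\gamma_1$). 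Hence $\Theta(E_G,D)\big((\mu_1,\nu_1),(\mu_2,\nu_2)\big) = \int_X \big(\widetilde B(\mu_1\wedge\nu_2) + \widetilde B(\nu_1\wedge\mu_2)\big)$ after fixing signs. Since under the identification above the class of $(\mu_i,\nu_i)$ maps to the de Rham class of $\mu_i+\nu_i$, and since $\widetilde B(\mu_i\wedge\mu_j)$ and $\widetilde B(\nu_i\wedge\nu_j)$ are of type $(0,2)$ and $(2,0)$ respectively and therefore vanish, we get $\int_{X_0}\widetilde B\big((\mu_1+\nu_1)\wedge(\mu_2+\nu_2)\big) = \int_X\big(\widetilde B(\mu_1\wedge\nu_2)+\widetilde B(\nu_1\wedge\mu_2)\big)$, which is exactly the Atiyah--Bott integral \eqref{o12} defining $\Theta_{\mathcal R}$ at the corresponding point of ${\mathcal R}^0(G)$. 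This chain of equalities is the content of the theorem.

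I would organize the writeup as: (1) set up the Dolbeault model of ${\mathbb H}^1({\mathcal C}_\bullet)$ and record the explicit cup-product formula in that model; (2) identify this model with $H^1(X_0,\underline{\mathrm{ad}}(F_G))$ and check that the identification is $d\Phi_0$; (3) match the two integral formulas using the type decomposition $(0,1)+(1,0)$ of a flat $\mathrm{ad}$-valued $1$-form and the vanishing of the $(0,2)$ and $(2,0)$ pieces. The main obstacle, and the only place real care is needed, is step (1): getting the Dolbeault/Čech translation of the hypercohomology cup product exactly right, including all the signs coming from the Koszul rule in ${\mathcal C}_\bullet\otimes{\mathcal C}_\bullet$, the sign in the definition of $\gamma_1$, and the connecting maps in the total complex, so that the resulting bilinear form comes out with the correct (antisymmetric) sign and with coefficient $+1$ rather than $-1$ relative to \eqref{o12}. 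Everything else — the type vanishing, Stokes/Serre duality identifications, and functoriality of the constructions in families — is routine and already essentially set up in the preceding sections.
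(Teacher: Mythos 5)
Your proposal is correct and follows essentially the same route as the paper: Dolbeault resolution of ${\mathcal C}_{\bullet}$ giving the model of ${\mathbb H}^1({\mathcal C}_{\bullet})$ by pairs $(\mu,\nu)$, identification of $d\Phi_0$ with $(\mu,\nu)\mapsto \mu+\nu$, the explicit integral formula for $\Theta(E_G,D)$ in that model, and the type-decomposition argument (vanishing of the $(2,0)$ and $(0,2)$ pieces) matching it with the Atiyah--Bott integral \eqref{o12}. Your flagging of the sign bookkeeping in the hypercohomology cup product is the one place the paper is terse, and handling it carefully as you propose would only make the argument more complete.
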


\begin{proof}
Take any $(E_G,\, D)\, \in\, {\mathcal M}^0_X(G)$. Let
\begin{equation}\label{de}
d\Phi_0 (E_G,D)\, :\, T_{(E_G,\, D)} {\mathcal M}^0_X(G)\, \longrightarrow\,
T_{\Phi_0(E_G,D)} {\mathcal R}^0(G)
\end{equation}
be the differential of the map $\Phi_0$ at the point $(E_G,\, D)$. We shall describe 
this homomorphism $d\Phi_0(E_G,D)$.

As before, ${\mathcal D}$ denotes the holomorphic connection on $\text{ad}(E_G)$ induced by the holomorphic 
connection $D$ on $E_G$. The Dolbeault operator on the holomorphic vector bundle $\text{ad}(E_G)$ will be 
denoted by $\overline{\partial}'$. The Dolbeault operator on the holomorphic vector bundle 
$\text{ad}(E_G)\otimes K_X$ will be denoted by $\overline{\partial}'_1$.

Consider the Dolbeault resolution of the complex ${\mathcal C}_{\bullet}$ in \eqref{e7}:
\begin{equation}\label{fr}
\begin{matrix}
0&& 0\\
\Big\downarrow && \Big\downarrow\\
\text{ad}(E_G) & \stackrel{\mathcal D}{\longrightarrow} & \text{ad}(E_G)\otimes K_X\\
\Big\downarrow && \Big\downarrow\\
\Omega^{0,0}_X(\text{ad}(E_G))& \stackrel{\mathcal D}{\longrightarrow} & \Omega^{1,0}_X
(\text{ad}(E_G))\\
~ \Big\downarrow \overline{\partial}'&& ~\Big\downarrow\overline{\partial}'_1\\
\Omega^{0,1}_X (\text{ad}(E_G))
& \stackrel{{\mathcal D}'}{\longrightarrow} & \Omega^{1,1}_X(\text{ad}(E_G))\\
\Big\downarrow && \Big\downarrow\\
0&& 0
\end{matrix}
\end{equation}
where ${\mathcal D}'$ is constructed using $\mathcal D$ and the differential operator $\partial$ on
$(0,1)$-forms on $X$; more precisely, ${\mathcal D}'(s\otimes w)\,=\, {\mathcal D}(s)\wedge w+ s\wedge
\partial (w)$, where $s$ is a locally defined section of $\text{ad}(E_G)$ and $w$ is a locally defined
$(0,\, 1)$-form. Note that $\overline{\partial}'_1\circ \mathcal D +{\mathcal D}'\circ\overline{\partial}'$
is the curvature of the connection ${\mathcal D}$ on $\text{ad}(E_G)$. But ${\mathcal D}$ is a holomorphic
connection because $D$ is so, and hence ${\mathcal D}$ is flat. This implies that
\begin{equation}\label{ivo}
\overline{\partial}'_1\circ \mathcal D +{\mathcal D}'\circ\overline{\partial}'\,=\,0\, .
\end{equation}

We observe that the differential operator ${\mathcal D}'$ extends naturally to
the direct sum $$\Omega^{0,1}_X (\text{ad}(E_G))\oplus
\Omega^{1,0}_X (\text{ad}(E_G))\, ,$$ but it is identically zero on $\Omega^{1,0}_X (\text{ad}(E_G))$, because
the sheaf of $(2,\, 0)$-forms on $X$ is the zero sheaf. Also, the differential operator
$\overline{\partial}'_1$ naturally extends to
$$(\text{ad}(E_G)\otimes K_X)\oplus \Omega^{0,1}_X (\text{ad}(E_G))\, ,$$ but
it is identically zero on $\Omega^{0,1}_X (\text{ad}(E_G))$, because
the sheaf of $(0,\, 2)$-forms on $X$ is the zero sheaf.

Using these together with \eqref{ivo}, from the resolution in \eqref{fr} we have the complex of vector spaces
\begin{equation}\label{e15}
0\, \longrightarrow\, C^\infty(X,\, \text{ad}(E_G))\, \stackrel{\overline{\partial}'\oplus
{\mathcal D}}{\longrightarrow}\,C^\infty(X,\, \Omega^{0,1}_X(\text{ad}(E_G)))\oplus
C^\infty(X,\, \text{ad}(E_G)\otimes K_X)
\end{equation}
$$
\, \stackrel{{\mathcal D}'+{\overline{\partial}'_1}}{\longrightarrow}\,
C^\infty(X,\, \Omega^{1,1}_X(\text{ad}(E_G))) \, \longrightarrow\, 0\, .
$$
Since \eqref{fr} is a fine resolution of ${\mathcal C}_{\bullet}$, for \eqref{e15}, we have
\begin{equation}\label{e16}
{\mathbb H}^1({\mathcal C}_{\bullet})\,=\, \frac{\text{kernel}({\mathcal D}'+
\overline{\partial}'_1)}{(\overline{\partial}'\oplus {\mathcal D})(C^\infty(X,\, \text{ad}(E_G)))}\, .
\end{equation}

To describe the target space $T_{\Phi_0(E_G,D)} {\mathcal R}^0(G)$ of the homomorphism
in \eqref{de}, note
that
$$
\nabla\, :=\, {\mathcal D}+ \overline{\partial}'\, :\, \text{ad}(E_G)\,\longrightarrow\,
\text{ad}(E_G)\otimes T^*_{\mathbb R}X
$$
is a flat connection on the vector bundle $\text{ad}(E_G)$ (its curvature is
$\overline{\partial}'_1\circ \mathcal D +{\mathcal D}'\circ\overline{\partial}'$; see
\eqref{ivo}). In fact, it is the one induced by the
flat connection on the principal $G$--bundle $E_G$ associated to the holomorphic
connection $D$. As in \eqref{d1}, consider the complex of vector spaces
$$
0\, \longrightarrow\, C^\infty(X,\, \text{ad}(E_G))
\, \stackrel{\nabla}{\longrightarrow}\, C^\infty(X,\, \text{ad}(E_G)\otimes T^*_{\mathbb R}X)
$$
$$
\stackrel{\nabla}{\longrightarrow}\, C^\infty(X,\, \text{ad}(E_G)\otimes \bigwedge\nolimits^2
T^*_{\mathbb R}X)\, \longrightarrow\, 0\, .
$$
Now as in \eqref{e14} we have
\begin{equation}\label{e17}
T_{\Phi_0(E_G,D)} {\mathcal R}^0(G)\,=\, \frac{\text{kernel}(\nabla)}{\nabla(C^\infty(X,\,
\text{ad}(E_G)))}\, .
\end{equation}

Let
\begin{equation}\label{xi}
\xi\, :\, \frac{\text{kernel}({\mathcal D}'+{\overline{\partial}}'_1)}{(\overline{\partial}'
\oplus {\mathcal D})(C^\infty(X,\, \text{ad}(E_G)))}\,\longrightarrow\,
\frac{\text{kernel}(\nabla)}{\nabla(C^\infty(X,\, \text{ad}(E_G)))}
\end{equation}
be the homomorphism between the two quotient spaces in \eqref{e16} and \eqref{e17} defined by
$$
(\omega_1,\, \omega_2)\, \longmapsto\, \omega_1+\omega_2\, ,
$$
where $\omega_1\, \in\, C^\infty(X,\, \Omega^{0,1}_X(\text{ad}(E_G)))$ and
$\omega_2\, \in\, C^\infty(X,\, \text{ad}(E_G)\otimes K_X)$; note that $\xi$
defines a homomorphism between the quotient spaces.

Using the isomorphisms in \eqref{e16} and \eqref{e17}, the differential
$d\Phi_0 (E_G,D)$ in \eqref{de} coincides with $\xi$ constructed in \eqref{xi}.

We shall now describe the two--form $\Theta(E_G,D)$ (constructed in \eqref{e10})
on ${\mathbb H}^1({\mathcal C}_{\bullet})$ in terms of the isomorphism in \eqref{e16}.

Consider the bilinear form on $C^\infty(X,\, \Omega^{0,1}_X(\text{ad}(E_G)))\oplus
C^\infty(X,\, \text{ad}(E_G)\otimes K_X)$ defined by
$$
(\omega_1,\, \omega_2)\otimes (\omega'_1,\, \omega'_2)\, \longmapsto\,
\int_X \widetilde{B}(\omega_1\wedge \omega'_2) +
\int_X \widetilde{B}(\omega_2\wedge \omega'_1)\, ,
$$
where $\omega_1,\, \omega'_1\, \in\, C^\infty(X,\, \Omega^{0,1}_X(\text{ad}(E_G)))$
and $\omega_2,\, \omega'_2\, \in\, C^\infty(X,\, \text{ad}(E_G)\otimes K_X)$. This
pairing descends to a pairing on the quotient space 
$$
\frac{\text{kernel}({\mathcal D}'+
{\overline{\partial}}'_1)}{(\overline{\partial}'\oplus {\mathcal D})(C^\infty(X,\, \text{ad}(E_G)))}
$$
in \eqref{e16}. The resulting bilinear form on ${\mathbb H}^1({\mathcal C}_{\bullet})$
given by this descended form using the isomorphism in \eqref{e16} actually coincides with
the two--form $\Theta(E_G,D)$ constructed in \eqref{e10}.

In view of the above description of $\Theta(E_G,D)$, using the earlier observation that the 
differential $d\Phi_0 (E_G,D)$ in \eqref{de} coincides with $\xi$ constructed in \eqref{xi},
along with the observation at the end of Section \ref{se3.2} that the form in \eqref{o12} 
descends to the one in \eqref{e13}, it follows that the map $\Phi_0$ takes $\Theta_{\mathcal 
R}(\Phi_0(E_G,D))$ to $\Theta(E_G,D)$.
\end{proof}

Since the form $\Theta_{\mathcal R}$ is closed (it is in fact a symplectic form), and $\Theta$ is 
fiber-wise nondegenerate (Lemma \ref{lem1}), Theorem \ref{thm1} has the following corollary.

\begin{corollary}\label{cor1}
The two-form $\Theta$ on ${\mathcal M}^0_X(G)$ is a symplectic form.
\end{corollary}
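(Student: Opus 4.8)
\textbf{Proof proposal for Corollary \ref{cor1}.}

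The plan is to deduce the two claims defining ``symplectic form'' — closedness and fiber-wise nondegeneracy — separately, leaning entirely on Theorem \ref{thm1} and Lemma \ref{lem1}. First I would note that nondegeneracy of $\Theta$ at every point of ${\mathcal M}^0_X(G)$ is already recorded in Lemma \ref{lem1}, so nothing new is needed there; it is built into the Serre-duality construction in \eqref{e8}--\eqref{f1}, since $\beta$ being an isomorphism forces the anti-symmetric pairing $\Theta(E_G,D)$ to be nondegenerate. So the only substantive point is that $\Theta$ is a closed $2$-form.

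For closedness, the argument is transport of structure along the biholomorphism $\Phi_0$. By Theorem \ref{thm1} we have the identity $\Theta\,=\,\Phi_0^*\Theta_{\mathcal R}$ as algebraic (hence holomorphic) $2$-forms on ${\mathcal M}^0_X(G)$. Exterior differentiation commutes with pullback along a holomorphic map, so $d\Theta\,=\,d(\Phi_0^*\Theta_{\mathcal R})\,=\,\Phi_0^*(d\Theta_{\mathcal R})$. Now $\Theta_{\mathcal R}$ is the Goldman form of \eqref{e11}, which is a symplectic form on ${\mathcal R}^0(G)$ \cite[p.~208, Theorem]{Go}; in particular $d\Theta_{\mathcal R}\,=\,0$. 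Hence $\Phi_0^*(d\Theta_{\mathcal R})\,=\,0$, i.e. $d\Theta\,=\,0$. Combining this with the fiber-wise nondegeneracy from Lemma \ref{lem1}, the form $\Theta$ is a closed nondegenerate algebraic $2$-form, which is precisely the assertion that it is an (algebraic) symplectic form.

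There is really no main obstacle here: the corollary is a formal consequence of Theorem \ref{thm1} together with the already-known properties of the two inputs. The one point worth a sentence of care is that $\Phi_0$ is only a biholomorphism and not an algebraic isomorphism (indeed Proposition \ref{prop1} shows no algebraic isomorphism exists), so a priori one only gets that $\Theta$ is \emph{holomorphically} closed from this transport argument; but $\Theta$ is \emph{algebraic} by Lemma \ref{lem1}, and a holomorphic form that vanishes is in particular zero as an algebraic form, so $d\Theta\,=\,0$ holds in the algebraic category as well. That is all that is needed, and it completes the proof.
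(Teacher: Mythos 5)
Your proposal is correct and follows exactly the paper's argument: nondegeneracy is taken from Lemma \ref{lem1}, and closedness is obtained by pulling back the closed Goldman form $\Theta_{\mathcal R}$ through the identity $\Theta = \Phi_0^*\Theta_{\mathcal R}$ of Theorem \ref{thm1}. Your extra remark that holomorphic closedness suffices even though $\Phi_0$ is only a biholomorphism is a sensible clarification, but it does not change the route.
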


Since the form $\Theta$ in Lemma \ref{lem1} is algebraic, Theorem \ref{thm1} has the following
corollary.

\begin{corollary}\label{cor2}
The pulled back form $\Phi^*_0\Theta_{\mathcal R}$ on ${\mathcal M}^0_X(G)$ is algebraic.
\end{corollary}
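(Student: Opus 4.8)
The plan is to deduce the statement in one step from Theorem \ref{thm1} combined with Lemma \ref{lem1}. By Theorem \ref{thm1}, the pulled-back form $\Phi^*_0\Theta_{\mathcal R}$ coincides, as a two-form on ${\mathcal M}^0_X(G)$, with the form $\Theta$ produced in Lemma \ref{lem1}. Lemma \ref{lem1} asserts that $\Theta$ is a global algebraic section of $\Omega^2_{{\mathcal M}^0_X(G)}$. Hence $\Phi^*_0\Theta_{\mathcal R}$, being literally equal to $\Theta$, is itself an algebraic two-form on ${\mathcal M}^0_X(G)$. There is no further computation to perform at the level of the corollary.

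The point worth stressing, and the reason the corollary is recorded at all, is that the conclusion is \emph{not} a formal consequence of ``pullback preserves algebraicity.'' The map $\Phi_0$ is the restriction of the Riemann--Hilbert correspondence, which is genuinely transcendental: Proposition \ref{prop1} shows that ${\mathcal M}_X(G)$ is not even algebraically isomorphic to the affine scheme ${\mathcal R}(G)$, so $\Phi$ cannot be algebraic. Consequently, for a generic algebraic form $\eta$ on ${\mathcal R}^0(G)$ one would expect $\Phi^*_0\eta$ to be merely holomorphic. The substantive input is therefore Theorem \ref{thm1}, which identifies the a priori transcendental object $\Phi^*_0\Theta_{\mathcal R}$ with the object $\Theta$ that was constructed purely algebraically from the hypercohomology pairing \eqref{e10} and its family version.

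Accordingly, I would write the argument as a direct invocation rather than a new derivation: apply the equality $\Phi^*_0\Theta_{\mathcal R}\,=\,\Theta$ from Theorem \ref{thm1}, then cite the algebraicity clause of Lemma \ref{lem1} to conclude. I do not anticipate any obstacle here, precisely because every difficulty has already been discharged inside the proof of Theorem \ref{thm1}, where the Dolbeault models \eqref{e16} and \eqref{e17} of the two tangent spaces were matched and the defining pairings \eqref{e13} and \eqref{e10} were shown to agree. The corollary merely extracts the algebraic-geometric consequence of that identification.
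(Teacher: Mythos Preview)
Your argument is correct and matches the paper's own reasoning exactly: the corollary is stated as an immediate consequence of Theorem \ref{thm1} together with the algebraicity of $\Theta$ asserted in Lemma \ref{lem1}. Your added commentary on why this is not a formality (since $\Phi_0$ is transcendental) is apt and reflects the point made in the introduction.
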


\section*{Acknowledgements}

The author is very grateful to the referee for helpful comments.
The author thanks Gautam Bharali and Subhojoy Gupta heartily for very helpful comments.
Thanks are due to the Indian Institute of Science for hospitality while the work was
carried out. The author is supported by a J. C. Bose Fellowship.


\end{document}